\newtheorem{theorem}{Theorem}[section]
\newtheorem{conjecture}[theorem]{Conjecture}
\newtheorem{problem}[theorem]{Problem}
\newtheorem{lemma}[theorem]{Lemma}
\theoremstyle{definition}
\DeclareMathOperator{\degb}{degb}
\DeclareMathOperator{\degr}{degr}
\def\epsilon{\varepsilon}
\title{A Note on Large Degenerate Induced Subgraphs 
\\ in Sparse Graphs}
\author[1]{Alexander Clow\,\thanks{Supported by the Natural Sciences and Engineering Research Council of Canada (NSERC) through PGS D-601066-2025}}
\author[2]{Sean Kim\,}
\author[3]{Ladislav Stacho\,\thanks{Supported by  the Natural Sciences and Engineering Research Council of Canada (NSERC) through Discovery Grant R611368.}}
\date{}
\affil[1,2,3]{ \small{Department of Mathematics, Simon Fraser University}}
\begin{document}
\pagenumbering{arabic}

\maketitle

\begin{abstract}
Given a graph $G$ and a non-negative integer $d$ let $\alpha_d(G)$ be the order of a largest induced $d$-degenerate subgraph of $G$. We prove that for any pair of non-negative integers $k>d$, if $G$ is a $k$-degenerate graph, then $\alpha_d(G) \geq \max\{ \frac{(d+1)n}{k+d+1}, n - \alpha_{k-d-1}(G)\}$. For $k$-degenerate graphs this improves a more general lower bound of Alon, Kahn, and Seymour. By modifying our argument we obtain improved lower bound on $\alpha_d(G)$ for graphs of bounded genus. This extends earlier work on degenerate subgraphs of planar graphs.
\end{abstract}

\section{Introduction} 

A graph $G$ is $d$-degenerate if every subgraph $H$ of $G$ has minimum degree at most $d$.
The degeneracy of a graph $G$ is the least $d$ such that $G$ is $d$-degenerate.
Thus, a graph is $0$-degenerate  if and only if it has no edges, and $1$-degenerate if and only if it is a forests.
Given a fixed integer $d$ and a graph $G$,
we let $\alpha_d(G)$ be the order of a largest induced $d$-degenerate subgraph of $G$.
Hence, by setting $d=0$ one can recover the independence number of the graph $G$.
For more background and definitions in graph theory we refer the reader to \cite{west2001introduction}.

Alon, Kahn, and Seymour 
\cite{alon1987large}
began the study of $\alpha_d(G)$
when they
proved that in any graph $G$,
\[
\alpha_d(G) \geq \sum_{v\in V(G)} \min\Big\{1, \frac{d+1}{\deg(v)+1} \Big\}.
\]
Notice that this bound is tight for any disjoint union of cliques.
As remarked in \cite{alon1987large}, when $d = 0$ this returns a classic lower bound for independence number
proven by Caro \cite{caro1979new} and independently by Wei \cite{wei1981lower}.
As a corollary of this result, 
graphs with average degree $D \geq 2d$ have
\[
\alpha_d(G) \geq \frac{(d+1)n}{D+1}.
\]
Observe that every $k$-degenerate graph has average degree at most $2k$, with equality for any maximally $k$-degenerate graph.
So for any $d<k$, Alon, Kahn, and Seymour proved that in any $k$-degenerate graph
$\alpha_d(G) \geq \frac{(d+1)n}{2k+1}$.

There is a long history of studying $\alpha_d$ for planar graphs.
As corollary of the Four Colour Theorem, for all planar graphs $G$,  $\alpha_0(G) \geq \frac{n}{4}$.
This bound is tight as witnessed by $K_4$.
Proving this lower bound without the Four Colour Theorem remains open, although there has been some progress, see \cite{albertson1976lower, cranston2016planar}.

Similarly, $K_4$ demonstrates that there are planar graphs with $\alpha_1(G) = \frac{n}{2}$.
Albertson and Berman \cite{albertson1979conjecture} and Akiyama and Watanabe \cite{akiyama1987maximum} independently conjectured 
that $\alpha_1(G) \geq \frac{n}{2}$ for all planar graphs.
This conjecture has seen some significant attention since being proposed.
Indeed,  a corollary of Borodin's proof \cite{borodin1979acyclic} that all planar graphs are acyclically $5$-colourable is that for all planar graphs
$G$, $\alpha_1(G) \geq \frac{2n}{5}$.
This remains the best general lower bound on $\alpha_1$ for planar graphs.
The conjecture is known to be true for triangle-free planar graphs, see \cite{salavatipour2006large}.
The current best lower bound on $\alpha_1$ for triangle-free planar graphs $G$
is from Dross, Montassier, and Pinlou \cite{dross2019large}
who proved that $\alpha_1(G) \geq \frac{6n+7}{11}$.

More recently, 
studying $\alpha_d$ for $2 \leq d\leq 4$ in planar graphs has garnered some attention.
The octahedron witnesses that there exists planar graphs with $\alpha_2(G) = \frac{2n}{3}$.
Gu, Kierstead, Oum, Qi, and Zhu \cite{gu20223} conjecture that every planar graph has $\alpha_2(G) \geq \frac{2n}{3}$.
Notably, Dvo{\v{r}}{\'a}k and Kelly \cite{dvovrak2018induced} proved that if $G$ is a triangle-free planar graph
then the stronger bound $\alpha_2(G) \geq \frac{4n}{5}$ holds.
When $d = 3$
the octahedron and icosahedron witness that there exists planar graphs with $\alpha_3(G) = \frac{5n}{6}$.
Gu, Kierstead, Oum, Qi, and Zhu \cite{gu20223} showed that all planar graphs have $\alpha_3(G) \geq \frac{3n}{4}$.
For $d = 4$
the icosahedron witnesses that there exists planar graphs with $\alpha_4(G) = \frac{11n}{12}$.
The best bound here is from 
Lukot'ka, Maz{\'a}k, and Zhu \cite{lukot2015maximum}
who showed that for all planar graphs $\alpha_4(G) \geq \frac{8n}{9}$.

In this paper, we consider 
this problem for $k$-degenerate graphs.
Thus, our results are for a more general class of graphs than planar graphs, but more specific than the average degree cases 
studied in \cite{alon1987large}. 
Our primary contribution is the following theorem.

\begin{theorem}\label{Thm: Main Degen}
    Let $k > d$ be non-negative integers.
    If $G$ is a $k$-degenerate graph, then
    \[
    \alpha_d(G) \geq \max \Big\{ \frac{(d+1)n}{k+d+1}, n - \alpha_{k-d-1}(G)\Big\}.
    \]
    Furthermore, $V(G)$ can be partitioned into sets $X$ and $Y$ such that $G[X]$ is $d$-degenerate
    and $G[Y]$ is $(k-d-1)$-degenerate.
\end{theorem}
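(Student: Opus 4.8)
The plan is to reduce both bounds to a single greedy placement principle on a degeneracy ordering. First I would fix an ordering $v_1, \dots, v_n$ of $V(G)$ witnessing $k$-degeneracy, so that each $v_i$ has at most $k$ neighbours among $v_1, \dots, v_{i-1}$; call these the back-neighbours of $v_i$. The central observation is the following meta-statement: if $d_1, \dots, d_t$ are non-negative integers with $\sum_{j=1}^t (d_j + 1) \ge k+1$, then $V(G)$ can be partitioned into classes $P_1, \dots, P_t$ so that $G[P_j]$ is $d_j$-degenerate for each $j$. To prove it, I would process the vertices in the order $v_1, \dots, v_n$, maintaining a partial assignment to the classes. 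When placing $v_i$, its back-neighbours are already assigned; if every class $P_j$ currently contained at least $d_j+1$ back-neighbours of $v_i$, then $v_i$ would have at least $\sum_j (d_j+1) \ge k+1$ back-neighbours, contradicting that it has at most $k$. Hence some class contains at most $d_j$ back-neighbours of $v_i$, and we place $v_i$ there. After the full pass, the ordering restricted to $P_j$ witnesses that $G[P_j]$ is $d_j$-degenerate, since each of its vertices has at most $d_j$ back-neighbours inside $P_j$.

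Both conclusions then follow by choosing the $d_j$ appropriately. For the structural statement and the bound $n - \alpha_{k-d-1}(G)$, I would apply the principle with $t = 2$ and $(d_1, d_2) = (d, k-d-1)$; the hypothesis holds with equality since $(d+1) + (k-d) = k+1$, and $k-d-1 \ge 0$ because $k > d$. This yields the partition into $X := P_1$ and $Y := P_2$. As $G[Y]$ is $(k-d-1)$-degenerate we get $|Y| \le \alpha_{k-d-1}(G)$, so $\alpha_d(G) \ge |X| = n - |Y| \ge n - \alpha_{k-d-1}(G)$.

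For the bound $\tfrac{(d+1)n}{k+d+1}$, I would instead take all parts equal, $d_1 = \dots = d_t = d$ with $t = \lceil \tfrac{k+1}{d+1} \rceil$, so that $\sum_j (d_j+1) = t(d+1) \ge k+1$. The largest class is an induced $d$-degenerate subgraph on at least $n/t$ vertices, giving $\alpha_d(G) \ge n/t$, and it remains to check the arithmetic inequality $\lceil \tfrac{k+1}{d+1}\rceil \le \tfrac{k+d+1}{d+1}$, which rearranges exactly to $n/t \ge \tfrac{(d+1)n}{k+d+1}$.

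The argument is short once the greedy principle is isolated, so the only real subtleties are bookkeeping. The point I expect to need the most care is recognising that the single two-part partition certifying the second bound is not enough for the first: the class $X$ it produces can be as small as one vertex (for instance on a star, where $d$ forces all leaves out of $X$), so the fraction bound genuinely requires the finer equal-parts partition together with the pigeonhole step. The remaining care is in two elementary verifications — that bounded within-class back-degree really certifies $d_j$-degeneracy, and that $\lceil \tfrac{k+1}{d+1}\rceil \le \tfrac{k+d+1}{d+1}$, which one checks by writing $k+1 = q(d+1)+r$ with $0 \le r \le d$ and comparing the cases $r=0$ and $r \ge 1$.
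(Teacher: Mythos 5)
Your proof is correct, and for the fractional bound it takes a genuinely different route from the paper. The two-part case of your greedy lemma is, in substance, exactly the paper's argument for the partition statement and for $\alpha_d(G)\ge n-\alpha_{k-d-1}(G)$: the paper colours vertices along an ordering with at most $k$ back-neighbours, putting $v_i$ in $B$ when it has at most $d$ earlier blue neighbours and in $R$ otherwise, which is your greedy with $(d_1,d_2)=(d,k-d-1)$. Where you diverge is the bound $\frac{(d+1)n}{k+d+1}$: the paper keeps only two classes but runs the colouring on the ordering in which each vertex has at most $k$ \emph{later} neighbours, and then double-counts the blue--red edges via $\sum_{u\in R}\degb(u)=\sum_{v\in B}\degr(v)$, bounded below by $(d+1)\vert R\vert$ and above by $k\vert B\vert$, which yields exactly $\vert B\vert\ge\frac{(d+1)n}{k+d+1}$. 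You instead invoke the general partition principle with $t=\lceil\frac{k+1}{d+1}\rceil$ equal parts and apply pigeonhole; your verification that this greedy placement never gets stuck, that within-class back-degree certifies degeneracy, and that $\lceil\frac{k+1}{d+1}\rceil\le\frac{k+d+1}{d+1}$ are all sound. Your route actually buys something: it gives $\alpha_d(G)\ge n/\lceil\frac{k+1}{d+1}\rceil$, which is at least the paper's constant and strictly larger unless $(d+1)\mid k$. In particular, when $(d+1)\mid(k+1)$ you obtain $\alpha_d(G)\ge\frac{(d+1)n}{k+1}$, which matches the clique upper bound and hence settles the corresponding cases of Conjecture~\ref{Conj: k>2 and d} (e.g.\ $\alpha_1(5)=\frac{1}{3}$), something the paper's two bounds only achieve when $k=2d+1$. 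The trade-off is that the paper's double-counting is tailored to produce the clean closed form $\frac{(d+1)n}{k+d+1}$ for every pair $k>d$ from a single two-colouring, whereas your bound's strength fluctuates with the residue of $k+1$ modulo $d+1$.
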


Since Theorem~\ref{Thm: Main Degen} provides linear lower bound on $\alpha_d$, 
for pairs of integers $k>d$, we define
\[
\alpha_d(k) := \inf_{G \text{ with degeneracy $k$}} \frac{\alpha_d(G)}{ \vert V(G) \vert },
\]
and note that $\alpha_d(k)$ is bounded away from $0$.

Trivially, as witnessed by the clique $K_{k+1}$, for all $k>d$, $\alpha_d(k) \leq \frac{d+1}{k+1}$.
Together with Theorem~\ref{Thm: Main Degen} this implies $\frac{2}{k+2} \leq \alpha_1(k) \leq \frac{2}{k+1}$ for all $k$.
Meaning that for large $k$ our bound in Theorem~\ref{Thm: Main Degen} is close to optimal.
On the other hand as $d$ grows relative to $k$, our lower bound that $\alpha_d(G) \geq \frac{(d+1)n}{k+d+1}$ tends to 
Alon, Kahn, and Seymour's bound for graphs with average degree at most $2k$.
In these cases, the other term in our lower bound can be helpful.
For example, Theorem~\ref{Thm: Main Degen} implies that if $G$ is $k$-degenerate 
and the independence number of $G$ is $\frac{n}{k+1}$, the minimum for maximally $k$-degenerate graphs, 
then $\alpha_d(G) \geq \frac{(d-1)n}{k+1}$.
This nearly matches the $\alpha_d(k)$ upper bound witnessed by cliques.

Notice that for certain values of $k$ and $d$,
for example $k = 3$ and $d=1$, $\alpha_d(k) = \frac{d+1}{k+1}$,
while for other values of $k$ and $d$, for example $k=2$ and $d=1$, $\alpha_d(k) < \frac{d+1}{k+1}$.
To see that $\alpha_1(3) = \frac{1}{2}$ notice that Theorem~\ref{Thm: Main Degen} implies 
$\alpha_1(G) \geq n - \alpha_1(G)$ for any $3$-degenerate graph $G$.
Meanwhile, $K_4$ with one edge subdivided is an example of $2$-degenerate graph that witnesses $\alpha_1(2) \leq \frac{3}{5}< \frac{2}{3}$.

Given the literature on planar graphs, studying this problem for graphs of bounded genus is natural.
For a preliminary on graphs on more general surfaces than the plane we recommend \cite{MoharThomassen}.
By adapting our proof of Theorem~\ref{Thm: Main Degen} we obtain the following bound for graphs of bounded genus.

\begin{theorem}\label{Thm: Surfaces}
    Let $g$ be an integer. If $G$ is a graph with genus at most $g$, then 
    \begin{itemize}
        \item $\alpha_1(G) \geq \frac{2n-24g+2}{7}$, and
        \item $\alpha_2(G) \geq \frac{n-2g+4}{3}$, and
        \item $\alpha_3(G) \geq \frac{n}{2} -g+1$, and
        \item $\alpha_4(G) \geq \frac{3n - 2g + 4}{5}$, and
        \item $\alpha_5(G) \geq \frac{2n - g + 2}{3}$.
    \end{itemize}
\end{theorem}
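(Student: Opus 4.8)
The plan is to handle the five cases through a single mechanism, while isolating $d=1$, which will require extra work. For $d \in \{2,3,4,5\}$ I would let $X$ be a \emph{maximum} induced $d$-degenerate subgraph of $G$ and set $Y = V(G) \setminus X$, so that $\alpha_d(G) = |X| = n - |Y|$ and it suffices to bound $|Y|$ from above. The first step is a maximality observation: for any $y \in Y$, the graph $G[X \cup \{y\}]$ is not $d$-degenerate, yet it is obtained from the $d$-degenerate graph $G[X]$ by adjoining the single vertex $y$; since adding a vertex whose back-degree is at most $d$ preserves $d$-degeneracy, $y$ must have at least $d+1$ neighbours in $X$. Consequently the bipartite subgraph $B$ consisting of all edges of $G$ between $X$ and $Y$ satisfies $e(B) \ge (d+1)|Y|$.

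The second step is to bound $e(B)$ from above using that $G$, and hence its subgraph $B$, has genus at most $g$. Because $B$ is bipartite it has girth at least $4$, so every face of an embedding is bounded by at least four edges; feeding this into Euler's formula gives a bound of the shape $e(B) \le 2|V(B)| - 4 + O(g) \le 2n - 4 + O(g)$. Combining with $e(B) \ge (d+1)|Y|$ yields $|Y| \le \frac{2n - 4 + O(g)}{d+1}$ and therefore $\alpha_d(G) = n - |Y| \ge \frac{(d-1)n + 4 - O(g)}{d+1}$. Specialising $d = 2,3,4,5$ reproduces the claimed leading terms and the constant $+4$; the coefficient of $g$ in each line is precisely the genus term produced by this Euler count on the bipartite cross-graph. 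The remaining work in these four cases is to carry out that count sharply enough (and to dispose of the degenerate small cases where Euler's formula requires $|V(B)| \ge 3$ or connectivity) so that the exact coefficients of $g$ come out as stated.

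The case $d=1$ is the genuine obstacle, since there the factor $(d-1)$ above vanishes and the argument collapses to a trivial bound: an induced forest cannot be located merely by counting cross-edges. Here I would instead mimic the proof of Theorem~\ref{Thm: Main Degen} with $k=5$, exploiting that a graph of genus $g$ behaves like a $5$-degenerate graph away from a set of vertices whose size is controlled by $g$. Concretely, order $V(G)$ by repeatedly deleting a vertex of minimum degree; Euler's formula shows that any subgraph of minimum degree at least $6$ forces positive genus and bounds its total excess degree by a linear function of $g$, so only $O(g)$ vertices of the ordering have back-degree exceeding $5$. Running the greedy partition of Theorem~\ref{Thm: Main Degen} for $k=5$ on the remaining vertices produces an induced forest of size at least $\tfrac{2}{7}$ of the good vertices, and charging the $O(g)$ exceptional vertices accounts for the $-24g$ term, giving $\alpha_1(G) \ge \frac{2n - 24g + 2}{7}$. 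I expect the main difficulty throughout to be the bookkeeping of the genus-dependent constants — in particular pinning down the exceptional-vertex count and the additive losses in the $d=1$ discharging — rather than any single conceptual hurdle.
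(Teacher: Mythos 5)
For $d \ge 2$ your argument is essentially the paper's: you produce a partition $(X,Y)$ in which $G[X]$ is $d$-degenerate and every vertex of $Y$ has at least $d+1$ neighbours in $X$, then bound the bipartite cross-graph via the Euler bound $e \le 2|V| + O(g)$ for triangle-free graphs of genus at most $g$ (the paper's Lemma~\ref{Lemma: Euler Tri-free}). The paper obtains the partition by the greedy two-colouring of Theorem~\ref{Thm: Main Degen} along an arbitrary vertex order rather than by taking a maximum induced $d$-degenerate subgraph, but your maximality argument is equally valid and the counting is identical; these cases are fine modulo the genus constants you explicitly defer.

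The genuine gap is in the case $d=1$. Your key claim is that in the minimum-degree peeling order all but $O(g)$ vertices have back-degree at most $5$, justified by the excess-degree bound $\sum_v(\deg(v)-6)\le 12g-12$. That bound only controls vertices of back-degree at least $7$ (equivalently, prefixes of minimum degree at least $7$, which Lemma~\ref{Lemma: Large Min Degree} confines to the first $12g$ positions). It says nothing about vertices of back-degree exactly $6$, which contribute zero excess: a graph of genus $g\ge 1$ can have minimum degree $6$ (a $6$-regular triangulation of the torus), so the peeling can in principle return many times to a prefix of minimum degree exactly $6$, and your excess count does not rule this out. The claim is in fact true, but it needs a separate argument --- for instance, the prefixes $H_1\subsetneq\cdots\subsetneq H_m$ of minimum degree at least $6$ satisfy $0\le e(H_j)-3|H_j|\le 6g-6$, this quantity is non-decreasing in $j$ and increases by at least $\tfrac{1}{2}$ whenever $H_{j+1}\setminus H_j$ sends an edge to $H_j$, and each step with no such edge splits off a non-planar component and hence consumes genus; together these give $m=O(g)$, though not obviously with the constant required for the $-24g$ term. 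By contrast, the paper sidesteps this entirely: it deletes the first $12g-1$ vertices of the peeling order and uses only that the remainder is $6$-degenerate, which does follow directly from Lemma~\ref{Lemma: Large Min Degree}. Note, however, that Theorem~\ref{Thm: Main Degen} applied to a $6$-degenerate graph yields $\tfrac{2n}{8}$, not $\tfrac{2n}{7}$, so the constant $\tfrac{2}{7}$ in the statement actually requires something like your $5$-degeneracy-after-deleting-$O(g)$-vertices route; closing the gap above is therefore essential to your proof rather than optional bookkeeping.
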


Recalling that for planar graphs, the best lower bound on $\alpha_1$ comes from acyclic colouring,
we note that this approach an acyclic colouring argument does not improve Theorem~\ref{Thm: Surfaces} in the $d=1$ case for graphs with larger genus.
This is because there are graphs with genus $g$ and acyclic chromatic number $\Omega(g^{\frac{4}{7}-o(1)})$, see \cite{alon1996acyclic}.
Even if we restrict to locally planar graphs, which are the classic example of graphs of large genus that behave like planar graphs,
one does not achieve an improvement to Theorem~\ref{Thm: Surfaces}.
This is because the best upper bound for the acyclic chromatic number of locally planar graphs is $7$ \cite{kawarabayashi2010star}. 

The paper is organized as follows.
In Section~2 we prove Theorem~\ref{Thm: Main Degen}.
In Section~3 we prove Theorem~\ref{Thm: Surfaces}.
While Section~4 discusses our computational approach to find improved upper bounds for $\alpha_d(k)$
for small values of $k$ and $d$.
We conclude with a discussion of future work.

\section{Degenerate Graphs}

This section will be dedicated to proving Theorem~\ref{Thm: Main Degen}.
Before launching into the proof of Theorem~\ref{Thm: Main Degen},
we note that an equivalent condition for a graph to be $d$-degenerate 
is the existence of linear ordering of its vertices
where each vertex has at most $d$-neighbours that come later it in the ordering.
See \cite{lick1970k}, Proposition~1 for a proof that the existence of such a vertex ordering is equivalent to a graph being $d$-degenerate.

\begin{proof}[Proof of Theorem~\ref{Thm: Main Degen}]

    Let $G$ be a $k$-degenerate graph, and let $v_1,...,v_n$ be a linear ordering of $V(G)$ 
    such that for all $i$, $ \vert N(v_i) \cap \{v_{i+1},\dots, v_n\} \vert  \leq k$. 
    We will prove $\alpha_d(G) \geq \frac{(d+1)n}{k+d+1}$ and $\alpha_d(G) \geq n - \alpha_{k-d-1}(G)$
    using separate, but similar, arguments.
    We begin by demonstrating that $\alpha_d(G) \geq \frac{(d+1)n}{k+d+1}$.

    We colour the vertices of $G$ blue or red in order starting from $v_1$, using the following procedure:
    let $B$ be the set of vertices already coloured blue (initially empty), and 
    let $R$ be the set of vertices already coloured red (initially empty).  
    Let the blue-degree of a vertex $v_i$, written $\degb(v_i)$,
    be defined by
    $\degb(v_i) := B \cap N(v_i) \cap \{v_1,...,v_{i-1}\}$, 
    and let the red-degree of a vertex $v_i$, written $\degr(v_i)$,
    be defined by
    $\degr(v_i) := R \cap N(v_i) \cap \{v_{i+1},...,v_n\}$. 
    We colour $v_1$ blue.
    At step $i>1$, we will assume that vertices $v_1,\dots, v_{i-1}$ are already coloured, and that no vertex $v_i,\dots, v_n$
    has been coloured.
    We will then colour vertex $v_i$ blue if $\degb(v_i) \le d$, or red if $\degb(v_i) > d$.

    Note that this procedure will colour all vertices of $G$.
    By our colouring rule, the reader can easily observe that $G[B]$ is a $d$-degenerate graph, and the original ordering (with red coloured vertices removed) witnesses its $d$-degeneracy. 
    Furthermore,
    by our choice of colouring procedure each vertex $v_j \in R$ satisfies $\degb(v_j) \geq d+1$.
    Hence, $\sum_{u \in R} \degb(u) \ge (d+1)\vert R \vert$.
    Next, observe that
    \[
    \sum_{u \in R} \degb(u) = \sum_{v \in B} \degr(v)
    \]
    since both sides of this equation count the number of edges $v_iv_j \in E(G)$
    where $i<j$, $v_i \in B$, and $v_j \in R$.
    Finally, we note that for all vertices $v_i$, $\degr(v_i) \leq  \vert N(v_i)\cap \{v_{i+1}, \dots, v_n\} \vert  \leq k$
    by our choice of vertex order $v_1,\dots, v_n$.
    Thus,  $\sum_{v \in B} \degr(v) \leq k\vert B \vert$. 
    
    We conclude that $(d+1)\vert R \vert \leq k \vert B \vert$.
    This implies $\vert R \vert \leq \frac{k}{d+1} \vert B \vert$.
    Recalling that $B$ and $R$ partition $V(G)$, since all vertices receive a colour, 
    we note that $n = \vert R \vert + \vert B \vert$.
    Hence, 
    \begin{align*}
        n & = \vert R \vert + \vert B \vert \\
          & \leq \Big(\frac{k}{d+1} \vert B \vert \Big) + \vert B \vert \\ 
          & = \frac{k+d+1}{d+1} \vert B \vert.
    \end{align*}
    It follows that $\frac{(d+1)n}{k+d+1} \leq \vert B \vert$.
    Recalling that $G[B]$ is $d$-degenerate this implies $\alpha_d(G) \geq \frac{(d+1)n}{k+d+1}$ as desired.

    Now, we turn our attention to proving $\alpha_d(G) \geq n - \alpha_{k-d-1}(G)$.
    Let $u_1,\dots, u_n$ be a linear ordering of $V(G)$ such that for all $i$, $ \vert N(u_i) \cap \{u_{1},\dots, u_{i-1}\} \vert  \leq k$.
    Such an ordering exists, since one can be obtained by reversing the indices of the order $v_1,\dots, v_n$, that is $u_i = v_{n-i+1}$.

    Preform the same procedure as before to colour vertices of $G$ either blue or red, 
    this time respecting the vertex order $u_1,\dots, u_n$ (rather than $v_1,\dots, v_n$).
    Let $B'$ and $R'$ be the resulting sets of blue and red vertices, respectively.
    Observe that $G[B']$ is $d$-degenerate by the same argument as for $G[B]$.

    We claim that $G[R']$ is $(k-d-1)$-degenerate.
    Notice that if true, $\alpha_d(G) + \alpha_{k-d-1}(G) \geq n$, since $B'$ and $R'$ partition $V(G)$.
    To see that $G[R']$ is $(k-d-1)$-degenerate, we note that, by the definition of our colouring procedure
    each vertex $u_i \in R'$ has $\degb(u_i) \ge d+1$.
    Since, $ \vert N(u_i) \cap \{u_{1},\dots, u_{i-1}\} \vert  \leq k$ by our choice of vertex ordering,
    this implies each vertex $u_i$ has at most $k-d-1$ red neighbours in $\{u_1,\dots, u_{i-1}\}$.
    Hence, $u_1,\dots, u_n$ with the blue vertices removed from it is a $(k-d-1)$-degenerate ordering of $G[R']$.
    This concludes the proof.
\end{proof}

\section{Graphs on Surfaces}

In this section we will prove Theorem~\ref{Thm: Surfaces}.
We begin 
considering graphs on surfaces
by noting a well known 
corollary of Euler's formula.
For completeness a proof is included. 

\begin{lemma}\label{Lemma: Euler Tri-free}
    If $G$ is a triangle-free graph with genus at most $g$, then $ \vert E(G) \vert  \leq 2n+4g-4$.
\end{lemma}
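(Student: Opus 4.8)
The plan is to apply Euler's formula to a suitable embedding of $G$, using triangle-freeness to bound the number of faces from above. First I would make a few harmless reductions. I may assume $G$ is simple, since loops and parallel edges create closed walks of length at most two and do not occur in the triangle-free setting of interest; I may also assume $G$ has no isolated vertices, since these only decrease $n$ (hence the right-hand side) without affecting $|E(G)|$ or the genus. Finally, because genus is additive over connected components, a routine summation reduces the statement to the case where $G$ is connected with $n \ge 3$, which is in any case the only regime in which the inequality is meaningful. So from now on I assume $G$ is connected and simple with $n \ge 3$ and genus $g_0 \le g$.

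Next I would embed $G$ on the orientable surface $S_{g_0}$ of genus $g_0$ and invoke the classical fact that a minimum-genus embedding of a connected graph is \emph{cellular} (every face is homeomorphic to an open disc). This is exactly the hypothesis needed for Euler's formula in the form
\[
n - |E(G)| + f = 2 - 2g_0,
\]
where $f$ is the number of faces of the embedding.

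I would then bound $f$ using triangle-freeness. In a cellular embedding each edge lies on the boundary of exactly two faces when counted with multiplicity, so if $\ell(\varphi)$ denotes the length of the boundary walk of a face $\varphi$, then $\sum_\varphi \ell(\varphi) = 2|E(G)|$. Since $G$ is simple and triangle-free it has girth at least $4$, so every face boundary walk has length at least $4$; summing over all faces gives $4f \le 2|E(G)|$, that is, $f \le |E(G)|/2$. Substituting into Euler's formula yields
\[
2 - 2g_0 = n - |E(G)| + f \le n - |E(G)| + \tfrac{1}{2}|E(G)| = n - \tfrac{1}{2}|E(G)|,
\]
which rearranges to $|E(G)| \le 2n + 4g_0 - 4 \le 2n + 4g - 4$, as desired.

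The main obstacle is the justification that \emph{every} face boundary walk has length at least $4$. A boundary walk of length one, two, or three would be, respectively, a loop, a pair of parallel edges (or a single bridge constituting the entire walk), or a closed walk on three edges; the first two are excluded by simplicity, the degenerate bridge case by connectivity together with $n \ge 3$, and a closed walk of length three in a simple graph must visit three distinct vertices and hence is a triangle, which is excluded by hypothesis. I expect this verification, together with the small-graph and disconnected-component bookkeeping in the reductions above, to be the only points requiring care; no deeper idea beyond Euler's formula is involved.
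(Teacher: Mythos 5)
Your proposal is correct and follows essentially the same route as the paper: a minimum-genus (cellular) embedding, Euler's formula, and the observation that triangle-freeness forces every face boundary walk to have length at least $4$, giving $f \le \tfrac{1}{2}|E(G)|$. The only difference is that you carry out the reductions (connectivity, simplicity, small cases) explicitly, which the paper omits.
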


\begin{proof}
    Let $G$ be a graph of Euler genus at most $g$.
    Let $\Pi$ be a minimum genus embedding of $G$.
    By the handshaking lemma, the sum of the size of all faces in $\Pi$ is equal to twice the number of edges in $G$.
    Since $G$ is triangle-free, this implies $F \leq \frac{1}{2}\vert E(G)\vert$, where $F$ denotes the number of faces in $\Pi$.
    Then Euler's formula implies $2-2g \leq n - \frac{1}{2}\vert E(G)\vert$.
    Isolating $\vert E(G)\vert$ implies the desired result.
\end{proof}

Next, we consider another useful
corollary of Euler's formula.
This lemma also
appears in \cite{bradshaw2025injective,clow2024oriented}.
For completeness a proof is included here.
Observe that this implies that if $g$ is a constant,
then
$\alpha_6(G) = (1-o(1))n$
for any graphs $G$ with genus at most $g$.

\begin{lemma}\label{Lemma: Large Min Degree}
    Let $k \geq 1$ be an integer.
    If $G$ is a graph of genus at most $g$ and minimum degree at least $k + 6$, then $G$ has fewer than $\frac{12g}{k}$ vertices.
\end{lemma}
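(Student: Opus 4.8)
The plan is to mirror the proof of Lemma~\ref{Lemma: Euler Tri-free}, combining Euler's formula with the handshaking lemma applied in two different ways: once to faces, to obtain an upper bound on $\vert E(G)\vert$, and once to vertices, to obtain a lower bound on $\vert E(G)\vert$ from the minimum degree hypothesis. Playing these two bounds against each other pins down $n$.

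First I would fix a minimum genus embedding $\Pi$ of $G$ and bound the number of faces. Since $G$ is simple and has minimum degree at least $k+6 \geq 7$, it contains no loops or parallel edges, so every face of $\Pi$ has boundary length at least $3$. By the handshaking lemma for faces (the sum of face lengths equals $2\vert E(G)\vert$), this gives $3F \leq 2\vert E(G)\vert$, where $F$ denotes the number of faces. Euler's formula in the inequality form used in Lemma~\ref{Lemma: Euler Tri-free}, namely $2 - 2g \leq n - \vert E(G)\vert + F$, then yields $2 - 2g \leq n - \tfrac{1}{3}\vert E(G)\vert$, and isolating the edge count gives $\vert E(G)\vert \leq 3n + 6g - 6$.

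Next I would apply the handshaking lemma to the vertices. The minimum degree hypothesis gives $2\vert E(G)\vert = \sum_{v} \deg(v) \geq (k+6)n$. Combining this with the upper bound above gives $(k+6)n \leq 2(3n + 6g - 6) = 6n + 12g - 12$, so that $kn \leq 12g - 12 < 12g$, i.e.\ $n < \frac{12g}{k}$, as required. The numerical coincidence making the statement clean is that the ``$+6$'' in the degree hypothesis is exactly what is needed to cancel the $6n$ coming from the surface edge bound $\vert E(G)\vert \leq 3n + 6g - 6$.

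There is little genuine difficulty here; the argument is a routine double count. The only point requiring a little care is the status of Euler's formula when $G$ is disconnected, since the inequality $n - \vert E(G)\vert + F \geq 2 - 2g$ is cleanest for connected graphs. I would handle this either by applying the argument componentwise, noting that every component already has minimum degree at least $k+6$ and hence positive genus, and then summing via additivity of genus over components; or simply by invoking the same inequality form of Euler's formula already used in Lemma~\ref{Lemma: Euler Tri-free}. The strict inequality in the conclusion comes for free from the ``$-12$'' term (equivalently the ``$-6$'' in the edge bound), so no separate argument is needed to rule out equality.
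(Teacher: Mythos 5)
Your proof is correct and is essentially the same argument as the paper's: both combine Euler's formula with the face version of handshaking to get $\vert E(G)\vert \leq 3n+6g-6$ (the paper phrases this as $n - \tfrac{1}{3}\vert E\vert > -2g$) and then play it against the vertex-degree sum $\sum_v \deg(v) \geq (k+6)n$, which the paper writes as $\sum_v(\deg(v)-6) < 12g$. The only difference is cosmetic bookkeeping, plus your (reasonable) extra care about disconnected graphs, which the paper glosses over.
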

\begin{proof}
    Let $G$ be a graph of Euler genus at most $g$ and $\delta(G) \geq k + 6$.
    By Euler's formula and the handshaking lemma, $n - \frac{1}{3}|E| > -2g$. Rearranging this,
    \[\sum_{v \in V(G)} (\deg(v) - 6) < 12g.\]
    Since, each vertex has degree at least $k + 6$, the number of terms in this sum is less than $\frac{12g}{k}$, completing the proof.
\end{proof}

We are now prepared to prove Theorem~\ref{Thm: Surfaces}.

\begin{proof}[Proof of Theorem~\ref{Thm: Surfaces}]
    Let $G$ be a graph with genus at most $g$.
    The outline of the proof is consistent across different choices of $2 \leq d \leq 5$.
    Unfortunately, the same argument does not apply to the $d=1$ case.
    Hence, 
    we consider these two cases separately.

    Suppose $d\geq 2$.
    Let $v_1,\dots, v_n$ be a fixed but arbitrary linear ordering of $V(G)$.
    Colour vertices blue or red, using the same procedure as in the proof of Theorem~\ref{Thm: Main Degen}
    with respect to the vertex order $v_1,\dots, v_n$ and our choice of $d$.
    Also, let blue-degree and red-degree be defined per the proof of Theorem~\ref{Thm: Main Degen}.
    Let $B$ and $R$ be the resulting set of blue and red vertices, respectively.

    By the definition of our vertex colouring procedure, for all $u \in R$, $\degb(u) \geq d+1$.
    Furthermore, like in the proof of Theorem~\ref{Thm: Main Degen}, $G[B]$ is $d$-degenerate.
    Let $X$ be the set of all edges $v_iv_j$ such that $i<j$, $v_i\in B$, and $v_j \in R$,
    and let $H$ be the subgraph of $G$ consisting of all edges in $X$ and their incident vertices.
    Then $R \subseteq V(H)$, $H$ is a bipartite graph, and
    $X \geq (d+1)  \vert R \vert $.
    
    Since $G$ has genus at most $g$ and $H$ is a subgraph of $G$, $H$ has genus at most $g$.
    Since $H$ is a bipartite graph with genus at most $g$, and $\vert V(H) \vert \leq n$,
    observe that
    Lemma~\ref{Lemma: Euler Tri-free}
    implies that $\vert E(H)\vert = \vert X\vert \leq 2n+4g-4$.

    We have shown that $(d+1)|R|\leq 2n+4g-4$. 
    Hence, $|R| \leq \frac{2n+4g-4}{d+1}$.
    Since $B$ and $R$ partition $V(G)$, 
    $n = \vert B \vert + \vert R \vert$.
    Combining these facts we obtain $\vert B \vert \geq n - \frac{2n+4g-4}{d+1}$.
    Since $G[B]$ is $d$-degenerate,
    and $d\geq 2$
    this completes proof of this case $d \geq 2$.

    Now suppose $d=1$.
    Let $v_1,\dots, v_n$ be a linear ordering of the vertices of $V(G)$
    such that for all $i$, $v_i$ is a vertex of minimum degree in $G[\{v_1,\dots, v_i\}]$.
    Then, Lemma~\ref{Lemma: Large Min Degree} implies for all $i\geq 12g$,
    $|N(v_i) \cap \{v_1,\dots, v_{i-1}\}|\leq 6$.
    Letting $H = G - \{v_1,\dots, v_{12g-1}\}$, this implies $H$ is $6$-degenerate.

    Since $H$ is $6$-degenerate, Theorem~\ref{Thm: Main Degen} implies
    $\alpha_1(H) \geq \frac{2}{7}\vert V(H) \vert$.
    Given $H$ is an induced subgraph of $G$, 
    we note that $\alpha_1(G) \geq \frac{2|V(H)|}{7}.$
    Thus, $\alpha_1(G) \geq \frac{2n-24g+2}{7}$ as required.
    This completes the proof.
\end{proof}

\section{Future Work}

The most natural open problem resulting from our work is to determine exact values for $\alpha_d(k)$. 
After conducting extensive computational work, we conjecture the following.

\begin{conjecture}\label{Conj: 2-degen}
    $\alpha_1(2) = \frac{3}{5}$.
\end{conjecture}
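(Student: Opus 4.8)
The upper bound $\alpha_1(2) \le \frac{3}{5}$ is already witnessed by $K_4$ with one edge subdivided, so the content of the conjecture is the matching lower bound: every $2$-degenerate graph $G$ satisfies $\alpha_1(G) \ge \frac{3}{5}n$. The plan is to organize the proof around the independence number $\alpha_0(G)$, using Theorem~\ref{Thm: Main Degen} (with $k=2$ and $d=1$, so that $k-d-1 = 0$) to dispatch the extreme regimes and to isolate a single hard middle regime. Indeed, Theorem~\ref{Thm: Main Degen} gives
\[
\alpha_1(G) \ge \max\Big\{ \tfrac{n}{2},\ n - \alpha_0(G) \Big\}.
\]

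First I would handle the two easy regimes. If $\alpha_0(G) \ge \frac{3}{5}n$, then any maximum independent set is already an induced forest, so $\alpha_1(G) \ge \alpha_0(G) \ge \frac{3}{5}n$. If instead $\alpha_0(G) \le \frac{2}{5}n$, then the second term above gives $\alpha_1(G) \ge n - \alpha_0(G) \ge \frac{3}{5}n$. Note that the extremal example sits exactly on the boundary $\alpha_0(G) = \frac{2}{5}n$ of the second regime, a good consistency check, since there $n - \alpha_0(G) = \frac{3}{5}n$ is tight.

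The crux is therefore the middle regime $\frac{2}{5}n < \alpha_0(G) < \frac{3}{5}n$, where the $\frac{n}{2}$ bound falls short and $n - \alpha_0(G)$ also falls short. Here I would fix a maximum independent set $I$ and set $A = V(G)\setminus I$; both $|I|$ and $|A| = n - |I|$ then lie in the open interval $(\frac{2}{5}n, \frac{3}{5}n)$. By maximality of $I$, every vertex of $A$ has a neighbour in $I$. Starting from the edgeless (hence forest) graph $G[I]$, I would add vertices of $A$ one at a time, processing $A$ according to a $2$-degeneracy ordering of $G[A]$, and argue that only a controlled fraction of $A$ must be rejected. The sparsity of $G$ (a $2$-degenerate graph has at most $2n-3$ edges) together with the fact that each added vertex already has at least one neighbour present should bound, via an amortized counting argument, the number of vertices whose insertion would close a cycle.

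The main obstacle is precisely this middle regime: an $A$-vertex can fail to be addable because of edges inside $A$ or because it acquires several neighbours in the growing forest, and making the count tight enough to reach the threshold $\frac{3}{5}n$ (rather than merely $\frac{n}{2}$) is delicate. I expect the cleanest route to be a discharging argument on a vertex-minimal counterexample: reduce away vertices of degree at most $1$ immediately, and then analyse the unavoidable degree-$2$ vertices, distinguishing whether their two neighbours are adjacent, both lie in $I$, or are joined through the current forest. Verifying that every such local configuration is reducible, against the tightness of the subdivided-$K_4$ example, will be the heart of the proof.
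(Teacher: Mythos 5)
This statement is an open conjecture in the paper, not a theorem: the authors offer only computational evidence (exhaustive search over small graphs and a genetic algorithm) and explicitly leave $\alpha_1(2)=\tfrac{3}{5}$ unproved. Your proposal does not close that gap. The parts you actually prove are correct but are exactly what the paper already gives: the upper bound $\alpha_1(2)\le\tfrac{3}{5}$ from (disjoint copies of) $K_4$ with a subdivided edge, and, via Theorem~\ref{Thm: Main Degen} with $k=2$, $d=1$, the bound $\alpha_1(G)\ge\max\{\tfrac{n}{2},\,n-\alpha_0(G)\}$, which settles the regimes $\alpha_0(G)\ge\tfrac{3}{5}n$ and $\alpha_0(G)\le\tfrac{2}{5}n$. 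Everything then hinges on the middle regime $\tfrac{2}{5}n<\alpha_0(G)<\tfrac{3}{5}n$, and there you give no argument, only two candidate strategies (``I expect the cleanest route to be \dots'', ``will be the heart of the proof''). That is precisely the open content of the conjecture, so the proposal is a reduction plus a research plan, not a proof.

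Moreover, the sketched middle-regime arguments have concrete obstacles you should not understate. Growing a forest from a maximum independent set $I$ by inserting vertices of $A=V(G)\setminus I$ one at a time can fail badly without a careful potential function: a vertex of $A$ is rejected as soon as it has two neighbours in one component of the current forest, and with $|A|$ as large as nearly $\tfrac{3}{5}n$ and up to $2n-3$ edges available, the naive count (each rejection ``costs'' two edges) only certifies that roughly $|I|+|A|-\tfrac{1}{2}|E|$-type quantities survive, which does not beat $\tfrac{n}{2}$ in general. The greedy order also matters: a $2$-degeneracy ordering of $G[A]$ controls edges inside $A$ but not how many neighbours a vertex of $A$ has back in $I$, which is where cycles actually close. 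The discharging alternative is likewise only named, not executed; identifying a reducible configuration set tight against the subdivided $K_4$ is the entire difficulty. Until one of these routes is carried out in full, the lower bound $\alpha_1(G)\ge\tfrac{3}{5}n$ for $2$-degenerate $G$ remains conjectural, exactly as the paper states.
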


\begin{conjecture}\label{Conj: k>2 and d}
    For all $k\geq 3$ and $k>d$, $\alpha_d(k) = \frac{d+1}{k+1}$.
\end{conjecture}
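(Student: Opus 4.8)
The plan is to split the claimed equality into its two inequalities and observe that only one direction carries content. The upper bound $\alpha_d(k) \le \frac{d+1}{k+1}$ is already recorded in the introduction, witnessed by $K_{k+1}$, so the entire weight of the conjecture falls on the matching lower bound: every $k$-degenerate graph $G$ with $k \ge 3$ and $k > d$ satisfies $\alpha_d(G) \ge \frac{(d+1)n}{k+1}$. First I would recast this target in the language of the colouring procedure from the proof of Theorem~\ref{Thm: Main Degen}. Running that procedure on a degeneracy ordering yields blue and red sets $B,R$ partitioning $V(G)$, with $G[B]$ being $d$-degenerate and $(d+1)\vert R\vert \le k\vert B\vert$. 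A one-line computation using $n=\vert B\vert+\vert R\vert$ shows that the desired $\vert B\vert \ge \frac{(d+1)n}{k+1}$ is equivalent to the \emph{strengthened} edge inequality
\[
(d+1)\vert R\vert \le (k-d)\vert B\vert .
\]
Thus the whole problem reduces to saving an additive $d\vert B\vert$ over the crude estimate $\sum_{v\in B}\degr(v)\le k\vert B\vert$ that Theorem~\ref{Thm: Main Degen} relies on.

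Before attacking the general bound I would calibrate against the cases where Theorem~\ref{Thm: Main Degen} already suffices. When $k=2d+1$ one has $k-d-1=d$, so the second term becomes self-referential, $\alpha_d(G)\ge n-\alpha_d(G)$, immediately giving $\alpha_d(G)\ge \tfrac{n}{2}=\frac{(d+1)n}{k+1}$; this settles an infinite family (including $k=3,\ d=1$) and confirms the conjecture is not vacuous. I would also verify directly on $K_{k+1}$ that the reformulated inequality $(d+1)\vert R\vert\le (k-d)\vert B\vert$ holds there \emph{with equality}, with every blue vertex satisfying $\degr(v)=k-d$ rather than the worst case $k$. This is the decisive calibration point: the greedy procedure already attains the conjectured ratio on the extremal graph, so it is only the \emph{analysis} that is lossy, and the slack we must extract is exactly $d$ per blue vertex, on average.

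The main line of attack would then be an amortised or discharging argument that recovers the missing $d\vert B\vert$. Since the pointwise bound $\degr(v)\le k$ is genuinely tight for individual blue vertices, the saving cannot be local; instead I would assign each blue vertex the charge $k-\degr(v)$ and redistribute, arguing that the forward blue-neighbours together with the $d$-degeneracy of $G[B]$ force the total charge to be at least $d\vert B\vert$. Equivalently, I would pass to a fractional/LP relaxation of the colouring, or optimise jointly over the choice of ordering and the acceptance threshold, seeking an ordering in which red vertices cannot all simultaneously absorb $k$ blue edges. A complementary route is to take a minimal counterexample $G$ (minimising $n$, then $\vert E(G)\vert$), rule out reducible configurations such as low-degree vertices and small separators using minimality, and then derive a contradiction from a global count—mirroring the discharging proofs behind the planar $\alpha_d$ bounds cited in the introduction, but executed on the degeneracy order rather than a planar embedding.

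The principal obstacle is precisely that the greedy colouring admits no nontrivial local improvement: a blue vertex may legitimately have all $k$ of its forward neighbours red, so $(d+1)\vert R\vert\le (k-d)\vert B\vert$ can only emerge from a genuinely global interaction between $B$ and $R$. Any correct argument must also account for the hypothesis $k\ge 3$: since $\alpha_1(2)<\tfrac{2}{3}$, the inequality is \emph{false} at $k=2$, so the proof cannot follow monotonically from degeneracy alone and must exploit extra room that appears only once $k\ge 3$. I expect the crux to be isolating the structural feature of $k$-degenerate graphs with $k\ge 3$ that rules out the $k=2$ behaviour—most plausibly an averaging statement limiting how many red vertices can crowd a single blue vertex across the whole ordering—and that proving it uniformly in $d$, rather than one pair $(k,d)$ at a time as in the computations of Section~4, will be the hardest step.
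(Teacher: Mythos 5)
This statement is Conjecture~\ref{Conj: k>2 and d}: the paper does not prove it, and offers only computational evidence (exhaustive search over small graphs plus a genetic algorithm) for believing it. Your proposal, correspondingly, is not a proof either --- it is a reduction plus a plan of attack, and the decisive step is missing. What you do establish is correct and worth noting: the upper bound $\alpha_d(k)\le\frac{d+1}{k+1}$ is indeed the clique bound already recorded in the introduction; the algebraic reformulation of the lower bound as $(d+1)\vert R\vert\le(k-d)\vert B\vert$ is right; and the observation that the case $k=2d+1$ follows from the second term of Theorem~\ref{Thm: Main Degen} (since then $k-d-1=d$ and $\alpha_d(G)\ge n-\alpha_d(G)$ gives $\alpha_d(G)\ge\frac{n}{2}=\frac{(d+1)n}{k+1}$) is a genuine, if small, verified family beyond the single instance $k=3$, $d=1$ that the paper mentions. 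The calibration on $K_{k+1}$, where every blue vertex has $\degr(v)=k-d$, is also computed correctly.

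The gap is that everything after the calibration is aspiration rather than argument. You name three possible mechanisms (charge redistribution of $k-\degr(v)$, an LP relaxation over orderings and thresholds, a minimal counterexample with reducible configurations) but execute none of them, and you yourself identify why none can be routine: the pointwise bound $\degr(v)\le k$ is tight, the claim fails at $k=2$ (since $\alpha_1(2)\le\frac{3}{5}<\frac{2}{3}$), and no structural feature distinguishing $k\ge 3$ from $k=2$ is ever isolated. A further caution: your phrase ``the whole problem reduces to'' the inequality $(d+1)\vert R\vert\le(k-d)\vert B\vert$ overstates the reduction --- that inequality concerns the output of one particular greedy colouring on one particular ordering, so it is a \emph{sufficient} condition for the conjecture, not an equivalent one, and it may well be false for the greedy even if the conjecture is true (the conjectured extremal ratio only requires that \emph{some} $d$-degenerate induced subgraph of the right size exist). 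In short: the statement remains open in the paper, and your proposal, while a sensible framing of where the difficulty lies, does not close it.
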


Our code is available upon request.
As a short summary, using \texttt{gtools}, a package inside of \texttt{nauty} \cite{mckay2014practical}
we exhaustively generate small graphs with degree sequences that were of interest.
When this did not produce examples better than $K_4$ with an edge subdivided, or cliques,
we wrote a genetic algorithm to search for larger graphs that might disprove Conjecture~\ref{Conj: 2-degen}
or Conjecture~\ref{Conj: k>2 and d}. 
Again this did not produce better examples, although in several cases it could find large graphs that match the bounds in 
Conjecture~\ref{Conj: 2-degen}
and Conjecture~\ref{Conj: k>2 and d}.

We computed $\alpha_1(G)$ using an integer programming approach from \cite{melo2022maximum}.
For $d\geq 2$, our only means to calculate $\alpha_d(G)$ was to check the degeneracy of all induced subgraphs.
This computing bottleneck limited our ability to run our genetic algorithm at scale for cases where $d>1$.
Note that there is a Monte Carlo algorithm given in \cite{pilipczuk2012finding} for computing $\alpha_d(G)$.
However, for graphs with a small number of vertices (say $<50$), 
which is where we found the most success for our genetic algorithm, 
we did not find that random algorithms provided a substantial advantage.
Our code was run on the FIR supercomputer at Simon Fraser University.

This leads us to the following problem that might assist future computation work regarding $\alpha_d$.

\begin{problem}
    For each $d\geq 2$, determine an integer program that computes $\alpha_d(G)$ for an input graph $G$,
    or prove no such program exists.
\end{problem}

Given Theorem~\ref{Thm: Surfaces}, it is also natural to ask about induced subgraphs of graphs on surfaces.
We are particularly interested in induced forests in graphs on surfaces.
This leads us to conjecture the following.

\begin{conjecture}\label{Conj: Half forest}
    For all non-negative integers $g$, there exists a constant $f(g)$ such that,
    if $G$ is a graph with genus $g$, then $\alpha_1(G) \geq \frac{n}{2} -f(g)$.
\end{conjecture}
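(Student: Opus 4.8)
The plan is to reduce the genus-$g$ problem to the planar case by deleting a set of vertices whose size depends only on $g$. The enabling observation is that $\alpha_1$ is monotone under vertex deletion: an induced forest of $G-S$ is also an induced forest of $G$, so for any $S\subseteq V(G)$,
\[
\alpha_1(G)\ \ge\ \alpha_1(G-S)\ \ge\ \frac{n-|S|}{2}-f'\ =\ \frac{n}{2}-\Big(f'+\tfrac{|S|}{2}\Big).
\]
Hence any $S$ whose size is bounded by a function of $g$ is ``free,'' and the task becomes: peel off such a set until the graph is planar, then apply a planar bound. The base case $g=0$ is exactly the Albertson--Berman \cite{albertson1979conjecture} / Akiyama--Watanabe \cite{akiyama1987maximum} conjecture that $\alpha_1\ge \tfrac{n}{2}$ for planar graphs, which is open; the best known is Borodin's $\tfrac{2n}{5}$ \cite{borodin1979acyclic}, and since $\tfrac{n}{2}-f(0)\le \tfrac{2n}{5}$ fails for large $n$, the $g=0$ instance already implies that conjecture up to an additive constant. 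I would therefore first aim for a statement \emph{conditional} on Albertson--Berman, taking $f(0)=0$.

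To drive the genus down, I would delete the vertices of a shortest noncontractible cycle $C$ in a minimum-genus embedding. Cutting the surface along $C$ strictly decreases the Euler genus, so $G-V(C)$ has genus at most $g-1$, and recursion gives $f(g)\le f(g-1)+\tfrac{1}{2}|V(C)|$. Whenever a noncontractible cycle of length bounded by a function of $g$ exists — that is, when the edge-width (representativity) is small — this closes the induction with $f(g)$ depending only on $g$; iterating at most $g$ times and invoking the planar base case then yields the conjecture in this regime.

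The main obstacle is precisely the complementary regime of \emph{large} edge-width, namely the locally planar graphs such as high-edge-width triangulations or large toroidal grids, where every noncontractible cycle has length $\Theta(\sqrt n)$. Here no deletion of bounded size lowers the genus, so the recursion above pays an unbounded-in-$n$ price; likewise, cutting simultaneously along $g$ noncontractible curves to planarize duplicates $\Theta(\sqrt n)$ vertices, again destroying the constant defect. For these graphs one must argue directly, and I expect the correct route to exploit the fact that locally planar graphs behave like planar graphs for colouring problems (cf.\ \cite{MoharThomassen, kawarabayashi2010star}): the goal would be a discharging argument in which the genus contributes excess charge only near the $O(g)$ handles, yielding $\tfrac{n}{2}-O(g)$ with the deficiency localized. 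Making this rigorous is the crux, and because the locally planar case contains the planar case, a complete unconditional proof would in particular resolve Albertson--Berman; the realistic first target is thus the conditional statement, with the locally planar regime as the genuinely new difficulty.
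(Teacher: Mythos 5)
First, note that the statement you are trying to prove is stated in the paper as an open conjecture (Conjecture~\ref{Conj: Half forest}); the paper offers no proof of it, so there is nothing to compare your argument against except the conjecture itself. Judged on its own terms, your proposal is a sensible plan but not a proof, and you have in fact correctly located both of the places where it breaks. The first gap is the base case: the conjecture with $g=0$ asks for $\alpha_1(G)\ge \frac{n}{2}-f(0)$ for all planar graphs, and since $\frac{n}{2}-f(0) > \frac{2n}{5}$ once $n>10f(0)$, even this additive-constant relaxation of the Albertson--Berman/Akiyama--Watanabe conjecture is strictly stronger than Borodin's bound \cite{borodin1979acyclic} and remains open. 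A ``proof'' of the general statement that is conditional on an open conjecture is not a proof of the statement.

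The second gap is the inductive step. Deleting a shortest noncontractible cycle $C$ does reduce the genus, and the recursion $f(g)\le f(g-1)+\frac{1}{2}|V(C)|$ is valid as written, but it only closes if $|V(C)|$ is bounded by a function of $g$ alone. For locally planar graphs --- e.g.\ triangulations of the torus with edge-width $\Theta(\sqrt{n})$ --- every noncontractible cycle has $\Theta(\sqrt{n})$ vertices, so the deletion cost depends on $n$ and the additive error is no longer a constant. Your suggested remedy (a discharging argument in which the genus contributes excess charge only near the handles) is a research direction, not an argument: no charge assignment or discharging rules are specified, and since the locally planar case contains the planar case as a degenerate instance, any such argument would again have to resolve the open planar problem first. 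So the proposal identifies the difficulties accurately but resolves neither of them; the statement remains, as the paper presents it, a conjecture.
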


If $G$ is a triangle-free planar graph, then the conjecture is true.
In fact, $\alpha_1(G) \geq \frac{6n+7}{11}$, see \cite{dross2019large}.
While considering if there is an easy proof of 
Conjecture~\ref{Conj: Half forest} for triangle-free graphs,
we stumbled upon the following conjecture.
Note that this conjecture is trivial for graphs of girth $3$.
Observe that if the conjecture is true, then it is best possible,
as witnessed by a disjoint union of cycles of length $k$.

\begin{conjecture}\label{Conj: girth g}
    For all $k\geq 3$, if $G$ is a graph with $m$ edges and girth at least $k$, then $\alpha_1(G) \geq n - \frac{m}{k}$.
\end{conjecture}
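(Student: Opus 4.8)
The plan is to recast the conjecture as a feedback vertex set bound, peel off the ``easy'' vertices by a deletion induction in the spirit of Theorem~\ref{Thm: Main Degen}, and reduce to proving an integral rounding statement whose fractional version is clean.

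\textbf{Reformulation and a fractional certificate.} Writing $\tau(G) = n - \alpha_1(G)$ for the minimum size of a set whose deletion leaves a forest, the conjecture is equivalent to $\tau(G) \le m/k$ for every graph $G$ of girth at least $k$. The first thing I would record is a fractional certificate: assign $x_v = \deg(v)/(2k)$ to each vertex. For any cycle $C$, girth at least $k$ gives $|C| \ge k$, and since every vertex on a cycle has degree at least $2$, $\sum_{v \in C} x_v = \frac{1}{2k}\sum_{v\in C}\deg(v) \ge \frac{2|C|}{2k} \ge 1$. Thus $x$ is feasible for the covering linear program $\min\{\sum_v x_v : \sum_{v\in C} x_v \ge 1 \text{ for every cycle } C,\ x \ge 0\}$, and its value is $\frac{1}{2k}\sum_v \deg(v) = m/k$. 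So the fractional feedback vertex number is at most $m/k$, and the conjecture asserts that the integral optimum does not exceed this fractional value. This also explains why the bound is best possible: on a disjoint union of $k$-cycles the solution $x$ is integral and optimal.

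\textbf{The easy reductions.} I would prove the bound by induction, using that deleting a vertex cannot decrease the girth and that $\alpha_1$ is monotone under vertex deletion. If $G$ is a forest the bound is trivial. If $G$ has a vertex $v$ of degree at most $1$ then $\alpha_1(G) = \alpha_1(G-v)+1$ and induction gives $\alpha_1(G) \ge n - (m-\deg v)/k \ge n - m/k$. If $G$ has a vertex $v$ of degree at least $k$ then $\alpha_1(G) \ge \alpha_1(G-v) \ge (n-1) - (m-\deg v)/k \ge n - m/k$, since the saved edges pay for the deleted vertex. Finally, a connected component that is a single cycle of length $\ell \ge k$ can be removed at a cost of one vertex, paid for by its $\ell \ge k$ edges. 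After these steps it remains to treat graphs in which every vertex has degree between $2$ and $k-1$ and every component contains a branch vertex; the prototypical rigid core is the Petersen graph, which is $3$-regular of girth $5$ and already tight, with $\tau = 3 = m/k$.

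\textbf{The core and the main obstacle.} The difficulty is that on such cores single-vertex deletion fails. Deleting a vertex of degree $d \le k-1$ changes the potential $\Phi(G) := n - m/k - \alpha_1(G)$ by at most $1 - d/k > 0$ relative to $\Phi(G-v)$, so the induction loses ground; the benefit of destroying a long cycle is realized only \emph{globally}, when a deletion cascades into leaves that are then absorbed for free, and for the rigid cores there is no such cascade. I would therefore attack the core by rounding the fractional solution $x_v = \deg(v)/(2k)$ to an integral feedback vertex set without increasing its total, using the girth to control the uncovered cycles --- for instance by randomized rounding followed by an alteration step patching each surviving cycle, or by a discharging scheme on a \emph{minimum} feedback set that charges each chosen vertex to $k$ distinct edges of a girth cycle it alone hits. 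The crux, and the reason the $\Theta(\log n)$ integrality gap of feedback vertex set in general does not defeat us, is to make these charges disjoint: each vertex of a minimal feedback set witnesses a cycle of length at least $k$ lying otherwise in the remaining forest, but different vertices may witness heavily overlapping cycles, so arranging genuinely edge-disjoint (or fractionally disjoint) charges summing to $k$ per deleted vertex is where the real work lies. Establishing this disjointness, presumably by choosing both the feedback set and its witnessing cycles extremally, is the step I expect to be the hardest.
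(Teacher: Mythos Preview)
The statement you are attempting is labelled a \emph{conjecture} in the paper and sits in the Future Work section; the paper gives no proof, only the remarks that the case $k=3$ is trivial and that disjoint unions of $k$-cycles witness tightness. There is therefore no paper proof to compare your proposal against.

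Your proposal is also not a proof, and you acknowledge this. The reformulation $\tau(G)\le m/k$, the fractional certificate $x_v=\deg(v)/(2k)$, and the three easy reductions (degree $\le 1$, degree $\ge k$, cycle components) are all correct and leave exactly the core case --- minimum degree $2$, maximum degree $\le k-1$, no cycle components --- which is the substance of the conjecture. Your final paragraph names two plausible strategies (randomized rounding with repair, or charging each vertex of a minimal feedback set to $k$ edge-disjoint cycle edges) but explicitly concedes that arranging the disjointness ``is where the real work lies.'' So what you have is a clean framing and the routine preprocessing, not a resolution. One small correction: the integrality gap of the cycle-cover LP for \emph{undirected} feedback vertex set is bounded by a constant (at most $2$), not $\Theta(\log n)$; the logarithmic gap you may be recalling is for the directed problem. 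This actually strengthens, rather than weakens, the plausibility of a rounding approach, but it does not by itself close the gap to $1$ that the conjecture demands.
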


\bibliographystyle{abbrv}
\bibliography{bib}

\end{document}